\pgfplotsset{compat=1.10}
\theoremstyle{plain} 
\newtheorem{theorem}{Theorem}[section]
\newtheorem{corollary}[theorem]{Corollary}
\newtheorem{lemma}[theorem]{Lemma}
\newtheorem{definition}[theorem]{Definition}
\theoremstyle{remark}
\newtheorem{remark}[theorem]{Remark}
\newcommand{\R}{\mathbb{R}}
\newcommand{\N}{\mathbb{N}}
\newcommand{\eps}{\varepsilon}
\newcommand{\ind}{\mathds{1}}
\newcommand{\HH}{\mathcal{H}}
\DeclareMathOperator{\Sing}{Sing}
\def\XXint#1#2#3{{\setbox0=\hbox{$#1{#2#3}{\int}$}
     \vcenter{\hbox{$#2#3$}}\kern-.5\wd0}}
\numberwithin{equation}{section}
\title[Regularity of the vectorial free boundaries]{Rectifiability and almost everywhere uniqueness of the blow-up for the vectorial Bernoulli free boundaries}
\author[G.~De Philippis]{Guido De Philippis}
\address{\textit{G.~De Philippis:} Courant Institute of Mathematical Sciences, New York University, 251 Mercer St., New York, NY 10012, USA.}
\email{guido@cims.nyu.edu}
\author[M.~Engelstein]{Max Engelstein}
\address{\textit{M.~Engelstein:} Department of Mathematics, University of Minnesota, Minneapolis, MN, 55455}
\email{mengelst@umn.edu}
\author[L.~Spolaor]{Luca Spolaor}
\address{\textit{L.~Spolaor:} Department of Mathematics, University of California, San Diego, La Jolla, CA, 92093
 } \email{lspolaor@ucsd.edu}
\author[B.~Velichkov]{Bozhidar Velichkov}
\address{{\it B. Velichkov:} 
	Dipartimento di Matematica, Universit\`a di Pisa,
	Largo Bruno Pontecorvo, 5, 56127 Pisa - ITALY}
\email{bozhidar.velichkov@unipi.it}
\thanks{{\bf Acknowledgments.} 
M.E. has been partially supported by the NSF grant DMS 2000288. L.S. has been partially supported by the NSF grant DMS 1810645. B.V. was supported by the European Research Council (ERC), under the European Union’s Horizon 2020 research and innovation programme, through the project ERC VAREG - \it Variational approach to the regularity of the free boundaries \rm (grant agreement No. 853404).}
\subjclass[2010]{35R35, 49Q10, 47A75}
\keywords{free boundary, vectorial problem, singular set, rectifiability, stratification, uniqueness of the blow-up limits}
\begin{document}

\begin{abstract} We prove that for minimizers of the vectorial Alt-Caffarelli functional the two-phase singular set of the free boundary is rectifiable and the blow-up is unique almost everywhere on it. While the first conclusion is an application of the recent techniques developed by Naber and Valtorta, the uniqueness part follows from the rectifiability and a new application of the Alt-Caffarelli-Friedman monotonicity formula. 
\end{abstract}
\maketitle


\section{Introduction}

Let $\Omega\subset \R^d$ be an open subset of $\R^d$. We say that a function 
$$U=(u_1,\dots,u_k):\Omega\to\R^k$$ 
is in the Sobolev space $H^1(\Omega;\R^k)$, if $u_j\in H^1(\Omega)$ for every $j=1,\dots,k$, and we denote the Dirichlet integral of $U$ by 
$$\int_\Omega |\nabla U|^2\,dx:=\sum_{j=1}^k\int_\Omega |\nabla u_j|^2\,dx.$$
Moreover, we say that $U\in H^1_0(\Omega;\R^k)$ if $u_j\in H^1_0(\Omega)$ for every $j=1,\dots,k$. When $U\in H^1_0(\Omega;\R^k)$ we will automatically assume that $U$ is extended by zero outside $\Omega$.
We will denote by $|U|$ the norm of the vector $U$, that is, 
$$|U|=\Big(\sum_{j=1}^ku_j^2\Big)^{\sfrac12}.$$ 
Given a constant $\Lambda>0$, an open set $\Omega\subset\R^d$ and a function $U\in H^1(\Omega;\R^k)$, we define the vectorial Alt-Caffarelli functional as 
$$J(U;\Omega):=\int_\Omega \Big(|\nabla U|^2+\Lambda(x)\ind_{\{|U|>0\}}\Big)\,dx\,$$
where $\Lambda:\R^d\to\R$ is a positive $C^{0,\alpha}$-regular function bounded away from zero and infinity, that is, there is a constant $C_\Lambda>0$ such that
$$\frac1{C_\Lambda}\le \Lambda(x)\le C_\Lambda\quad\text{for every}\quad x\in \R^d.$$
For the sake of simplicity, throughout the paper we will assume that $\Lambda$ is a constant; the proofs in the general case require only minor standard technical modifications.\medskip

Given an open set $D\subset \R^d$, we say that the function $U:D\to\R^k$ is a local minimizer of $J$ in $D$ if $U\in H^1(\Omega;\R^k)$ for every open set $\Omega\Subset D$ and if 
$$J(U;\Omega)\le J(V;\Omega)\quad\text{for every}\quad V\in H^1(D;\R^k)\quad\text{such that}\quad U-V\in H^1_0(\Omega;\R^k).$$
The vectorial Bernoulli problem consists in minimizing the functional $J$ in some bounded open set $D$ among all functions $U\in H^1(D;\R^k)$ with prescribed boundary condition. In particular, given a minimizer $U$, we are interested in describing the local structure of the free boundary 
$\partial\Omega_U$ inside $D$, where $\Omega_U$ is the set
$$\Omega_U:=\{|U|>0\}.$$
This problem generalizes both the classical one-phase Bernoulli problem first studied by Alt and Caffarelli in \cite{altcaf} and the two-phase Bernoulli problem of Alt-Caffarelli-Friedman \cite{alcafr} for which the full regularity of the free boundary was obtained only recently in \cite{DSV}. \medskip

The vectorial Bernoulli problem was introduced simultaneously in \cite{CSY}, \cite{MTV1} and \cite{KL1}, and at first it was studied in the so-called {\it non-degenerate} case in which it is a priori known that at least one of the components of $U$ does not change sign. The regularity of the free boundary in the general {\it degenerate case} was first obtained in dimension $d=2$ (and for any $k\ge 2$) in \cite{spve}, where it was shown that $\partial\Omega_U$ can be decomposed is the disjoint union of two sets: the regular part $\text{\rm Reg}(\partial\Omega_U)$ is an open subset of $\partial\Omega_U$ and a smooth manifold, while the remaining singular set $\text{\rm Sing}(\partial\Omega_U)$ is contained in a countable union of $C^{1,\alpha}$-regular curves, the blow-up limit of $U$ being unique at every singular point. In higher dimensions, the $C^{1,\alpha}$ regularity of $\text{\rm Reg}(\partial\Omega_U)$  in the degenerate case was first obtained in \cite{KL2} and later in \cite{MTV2} (the precise definition of the regular part will be given in \cref{sub:2}). \medskip

The present paper is dedicated to the singular part of the vectorial free boundaries $\partial\Omega_U$, where $U$ is a local minimizer of the functional $J$. Before we give our main result \cref{t:main} (\cref{sub:main}), in \cref{sub:1} and \cref{sub:2}, we briefly recall some of the known properties of the solutions of the vectorial problem.\medskip

\subsection{Regularity of the vectorial free boundaries}\label{sub:1} Let $U$ be a local minimizer of $J$ in the open set $D\subset\R^d$. Then, the function $U$ is locally Lipschitz continuous in $D$, the set 
$\Omega_U:=\big\{|U|>0\big\}$
is an open subset of $D$ and the free boundary $\partial\Omega_U\cap D$ can be decomposed in the following three disjoint sets (see for instance \cite{MTV2}): 
\begin{itemize}
\item The regular part $\text{\rm Reg}(\partial\Omega_U)$ is the set of points on $\partial\Omega_U\cap D$ at which the Lebesgue density of $\Omega_U$ is precisely $\sfrac12$. It is now known that $\text{\rm Reg}(\partial\Omega_U)$ is an open subset of $\partial\Omega_U$ and is locally a $C^{1,\alpha}$-regular $(d-1)$-dimensional manifold (see \cite{KL2}, \cite{MTV2} and \cite{DST}, and also \cite{spve} for the two-dimensional case). \smallskip
\item The one-phase singular part $\text{\rm Sing}_1(\partial\Omega_U)$ is the set of points on $\partial\Omega_U\cap D$ at which the Lebesgue density of $\Omega_U$ is a number $\ell\in\big(\sfrac12,1\big)$. In \cite{MTV1} and \cite{MTV2}, it was shown that $\text{\rm Sing}_1(\partial\Omega_U)$ is a closed subset of $\partial\Omega_U\cap D$ such that:
\begin{itemize}
\item $\text{\rm Sing}_1(\partial\Omega_U)$ is empty in dimension $d<d^\ast$;
\item $\text{\rm Sing}_1(\partial\Omega_U)$ is a discrete set if the dimension of the space is exactly $d^\ast$;
\item $\text{\rm Sing}_1(\partial\Omega_U)$ has Hausdorff dimension $d-d^\ast$ if $d>d^\ast$;
\end{itemize}
where $d^\ast$ is the smallest dimension in which there are one-homogeneous solutions of the one-phase problem with a singular free boundary (that is, which is not locally the graph of a smooth function); we recall that for the moment it is only known that $d^\ast\in\{5,6,7\}$ (see \cite{JS} and \cite{DJ}).\smallskip

\item The two-phase singular part $\text{\rm Sing}_2(\partial\Omega_U)$ is the set of points on $\partial\Omega_U\cap D$ at which the Lebesgue density of $\Omega_U$ is $1$. 
\end{itemize}	

\subsection{Blow-up limits at two-phase singular points}\label{sub:2} Let $x_0\in\partial\Omega_U\cap D$ be fixed. For every $r>0$, we consider the rescaling 
$$U_{x_0,r}(x)=\frac1rU(x_0+rx).$$
Since $U$ is Lipschitz, we know that, for every $R>0$, there is $\rho>0$ such that the family of functions $U_{x_0,r}$, $r\in(0,\rho)$ is defined and uniformly Lipschitz continuous on $B_R$. Thus, there is a decreasing sequence $r_n\to0$ such that the sequence of functions $U_{x_0,r_n}$ converges locally uniformly to a function $V:\R^d\to\R^k$, which might depend on the blow-up sequence; we will say that $V$ is a blow-up limit of $U$ at $x_0$. It is well-known (see for instance \cite{MTV1}) that any blow-up limit of $U$ is
\begin{itemize}
\item Lipschitz continuous non-constantly zero function on $\R^d$;
\item a local minimizer of $J$ in $\R^d$;
\item a one-homogeneous function on $\R^d$.	
\end{itemize}	 
In \cite{MTV2} it was proved that if $x_0$ is a two-phase singular point, $x_0\in\text{Sing}_2(\partial\Omega_U)$, then every blow-up limit of $U$ at $x_0$ is of the form 
$$V(x)=Ax\quad\text{for some $d\times k$ real matrix $A$}.$$
Moreover, again in \cite{MTV2} it was shown that, even if $V$ might a priori depend on the blow-up sequence, the rank of the matrix $A$ depends only on the point $x_0$.
 We notice that the rank is an entire number between $1$ and $\min\{k,d\}$. 

\begin{enumerate}[(i)]
\item If $\text{Rk}(x_0)$ is $1$, then there is a unit vector $\nu\in\R^d$ such that the rows of the matrix $A$ are the vectors $\alpha_1\nu,\dots,\alpha_k\nu$, where the constants $\alpha_1,\alpha_2,\dots,\alpha_k\in\R$ satisfy
\begin{equation}\label{e:sum_alpha}
\alpha_1^2+\alpha_2^2+\dots+\alpha_k^2\ge \Lambda.
\end{equation}
\item If $1<\text{Rk}(x_0)\le k$, then any blow-up of $U$ at $x_0$ is of the form $Ax$, where the matrix $A=(a_{ij})_{ij}$ is such that 
$$\sum_{i,j}a_{ij}^2\ge c\Lambda,$$
where $c$ is a dimensional constant. 
\end{enumerate}
\begin{remark}
It was shown in \cite{MTV2} than if $A$ is a matrix of the form (i), for which \eqref{e:sum_alpha} holds, then the linear function $U(x)=Ax$ is a global minimizer of $J$ (that is, a local minimizer in $\R^d$). For what concerns the point (ii), classifying the matrices of rank higher than one, which are global minimizers is currently an open problem. 
\end{remark}	
  	  
\subsection{Stratification of $\text{\rm Sing}_2(\partial\Omega_U)$ and the main theorem.}\label{sub:main} For every $j=1,\dots,k$, we define the stratum $S_j$ as 
$$S_j=\Big\{x_0\in \text{Sing}_2(\partial\Omega_U)\ :\ \text{Rk}(x_0)=j\Big\}.$$
In \cite{MTV2} it was shown that, for every $j=1,\dots,k$, the set $S_j$ has Hausdorff dimension $d-j$. 
In this paper, we prove the following result 
\begin{theorem}\label{t:main}
Let $U:D\to\R^k$ be a local minimizer of $J$ in the open set $D\subset\R^d$. Then, for every $j=1,\dots,k$, the $j$-th stratum $S_j$ of $\,\text{\rm Sing}_2(\partial\Omega_U)$ is $(d-j)$-rectifiable and has locally finite $(d-j)$-dimensional Hausdorff measure. 
\end{theorem}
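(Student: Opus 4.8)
The plan is to prove the statement one stratum at a time, fixing $j\in\{1,\dots,k\}$ and running the quantitative stratification and rectifiable-Reifenberg machinery of Naber and Valtorta. The monotone quantity driving the argument is the Weiss energy
$$W(x_0,r):=\frac1{r^d}\int_{B_r(x_0)}\big(|\nabla U|^2+\Lambda\,\ind_{\{|U|>0\}}\big)\,dx-\frac1{r^{d+1}}\int_{\partial B_r(x_0)}|U|^2\,d\HH^{d-1},$$
which for a local minimizer is nondecreasing in $r$ and is constant in $r$ if and only if $U$ is one-homogeneous about $x_0$. Since $U$ is locally Lipschitz, $W(\cdot,r)$ is uniformly bounded above on compact subsets of $D$; moreover, at a two-phase singular point the Weiss density $W(x_0,0^+)$ equals $\Lambda\,|B_1|$, because for the linear blow-up $Ax$ the Dirichlet and boundary terms cancel. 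Hence the total energy drop $W(x_0,1)-\Lambda|B_1|$ is uniformly bounded, and this is the budget the whole scheme spends.

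Since any blow-up at a point of $S_j$ is a linear map $Ax$ with $\dim\ker A=d-j$, it is invariant under translations along the $(d-j)$-plane $\ker A$ and under no larger subspace; that is, it is $(d-j)$-symmetric but not $(d-j+1)$-symmetric. Calling $U$ \emph{$(m,\eps)$-symmetric} in $B_r(x_0)$ if the rescaling $U_{x_0,r}$ is $\eps$-close in $L^2(B_1)$ (suitably normalized) to a one-homogeneous function invariant under an $m$-dimensional space of translations, I define the quantitative stratum $S^{m}_{\eps,r}$ as the set of points at which $U$ is not $(m+1,\eps)$-symmetric at any scale $s\in(r,1)$. Because the admissible blow-ups form the finite-dimensional family of linear maps, the cone-splitting step is elementary linear algebra: $Ax$ is homogeneous about $p$ exactly when $p\in\ker A$, so approximate homogeneity at quantitatively independent centers $v_1,\dots,v_{m+1}$ forces $\ker A$ to be almost $(m+1)$-dimensional, raising the symmetry. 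It follows that each point of $S_j$ lies in $S^{d-j}_{\eps}:=\bigcap_{r>0}S^{d-j}_{\eps,r}$ for all $\eps$ below a threshold governed by the smallest nonzero singular value of $A$.

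The core of the proof is the $L^2$ best-approximation estimate. Let $\mu:=\HH^{d-j}\llcorner S^{d-j}_{\eps}$ and let $\beta_{\mu,\,d-j}(x,s)$ denote the $L^2(\mu)$-deviation of $\mu$ in $B_s(x)$ from its best-fitting $(d-j)$-plane. Using Weiss monotonicity decisively, one shows that a small drop of $W$ across the scales near $s$ forces $U_{x,s}$ to be close to a rank-$j$ linear map, whose free boundary, and hence $S_j$, concentrates near the $(d-j)$-plane $\ker A$; quantifying this yields an inequality of the form
$$\int_{B_r(x_0)}\int_0^{r}\beta_{\mu,\,d-j}^2(x,s)\,\frac{ds}{s}\,d\mu(x)\le C\int_{B_{2r}(x_0)}\big(W(x,2r)-W(x,0^+)\big)\,d\mu(x),$$
with $C=C(d,k,\Lambda)$. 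Together with the packing estimates coming from the bounded energy drop, this is exactly the hypothesis of the discrete and rectifiable Reifenberg theorems, whose conclusion is that $S^{d-j}_{\eps}$ is $(d-j)$-rectifiable with locally finite $\HH^{d-j}$-measure, bounded in terms of the (uniformly bounded) energy drop. Since each point of $S_j$ belongs to $S^{d-j}_{\eps}$ for some $\eps>0$ and these bounds are controlled by the uniform budget, letting $\eps\to0$ and taking a countable union gives the rectifiability and the local finiteness of $\HH^{d-j}\llcorner S_j$.

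The main obstacle is the quantitative rigidity behind the $L^2$ estimate: one must promote the qualitative facts — that two-phase blow-ups are linear and that $W$ is constant exactly on homogeneous minimizers — into effective, scale-uniform statements, showing that an $\eps$-small energy drop forces $\eps'$-closeness to a linear map of rank $j$ and that this closeness controls the planarity of the singular set. Two features make this delicate: the natural geometric object is the free boundary, a set, rather than the function $U$, so one must transfer the symmetry of $U$ into concentration of the singular measure near a plane; and the volume term $\Lambda\,\ind_{\{|U|>0\}}$ must be carried alongside the Dirichlet energy through every compactness and almost-monotonicity argument (with the $C^{0,\alpha}$ dependence of $\Lambda$ producing only lower-order errors in the non-constant case). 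A secondary point is that, for $j\ge2$, the symmetry gap degenerates as the smallest nonzero singular value of $A$ tends to zero, so the passage from the effective strata $S^{d-j}_{\eps}$ to the full stratum $S_j$ must be organized by covering; the case $j=1$ is cleaner, enjoying the uniform gap $\alpha_1^2+\dots+\alpha_k^2\ge\Lambda$ from \eqref{e:sum_alpha}.
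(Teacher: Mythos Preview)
Your proposal is correct and follows essentially the same route as the paper: both feed the Weiss monotonicity formula (\cref{l:weiss}), a quantitative cone-splitting lemma (\cref{l:qs}), and an $L^2$ $\beta$-number estimate controlled by the drop of the Weiss energy (\cref{l:ec}) into the Naber--Valtorta rectifiable-Reifenberg machinery. The only cosmetic difference is that the paper states the $\beta$-number bound pointwise at a single scale while you write its integrated form over scales; the content, and the caveat you flag about the $\eps$-dependence when passing from $S^{d-j}_\eps$ to $S_j$, are the same.
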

Combining this result with a suitable version af the Alt-Caffarelli-Friedman monotonicity formula we obtain 
\begin{corollary}\label{c:main2}
	Let $U:D\to\R^k$ be a local minimizer of $J$ in the open set $D\subset\R^d$.
	 Then, at $\mathcal H^{d-1}$-almost every point $x_0$ of $\ \text{\rm Sing}_2(\partial\Omega_U)$ the blow-up of $U$ is unique. 
\end{corollary}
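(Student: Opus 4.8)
The plan is to reduce the statement to the top stratum and then pin down the blow-up matrix $A$ coordinate by coordinate. By \cref{t:main}, for each $j$ the stratum $S_j$ is $(d-j)$-rectifiable with locally finite $\mathcal H^{d-j}$-measure; in particular, for $j\ge 2$ one has $\dim_{\mathcal H}S_j\le d-2$, so $\mathcal H^{d-1}(S_j)=0$. Hence $\mathcal H^{d-1}$-almost every point of $\Sing_2(\partial\Omega_U)$ lies in $S_1$, and it suffices to prove that the blow-up is unique at $\mathcal H^{d-1}$-a.e.\ $x_0\in S_1$. At such a point every blow-up is of the form $V(x)=Ax$ with $A$ of rank one, i.e.\ $A=\vec\alpha\otimes\nu$ for a unit vector $\nu\in\R^d$ and $\vec\alpha=(\alpha_1,\dots,\alpha_k)$ with $\alpha_1^2+\dots+\alpha_k^2\ge\Lambda$; equivalently $u_i(x)\approx\alpha_i(\nu\cdot x)$. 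Thus uniqueness of the blow-up amounts to showing that both the direction $\nu$ (up to sign) and the vector $\vec\alpha$ are independent of the blow-up sequence.

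For the direction I would use the rectifiability of $S_1$ directly. Since $S_1$ is $(d-1)$-rectifiable with locally finite $\mathcal H^{d-1}$-measure, at $\mathcal H^{d-1}$-a.e.\ $x_0\in S_1$ there is a unique approximate tangent plane $T_{x_0}$, and $x_0$ is a point of density one for $S_1$. On the other hand, the blow-up $V(x)=\vec\alpha(\nu\cdot x)$ is invariant under translations along $\{\nu\cdot x=0\}$, whose free boundary and singular set both coincide with $\{\nu\cdot x=0\}$. By the compatibility between the stratification spine and the tangent plane, the $(d-1)$-dimensional spine $\{\nu\cdot x=0\}$ of the blow-up is contained in $T_{x_0}$; since both are $(d-1)$-dimensional, they coincide. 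Hence the nodal hyperplane equals $T_{x_0}$ for every blow-up, and the line $\R\nu$ is uniquely determined.

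For the vector $\vec\alpha$ I would apply the classical Alt-Caffarelli-Friedman monotonicity formula to each component $u_i$ separately. The key point is that, since $U$ is harmonic in the open set $\Omega_U=\{|U|>0\}$, each $u_i$ is harmonic on $\{u_i>0\}$ and on $\{u_i<0\}$, so $u_i^+$ and $u_i^-$ are subharmonic, vanish at $x_0$, and have disjoint supports; thus the two-phase functional
\begin{equation*}
\Phi_i(r)=\frac1{r^4}\Big(\int_{B_r(x_0)}\frac{|\nabla u_i^+|^2}{|x-x_0|^{d-2}}\,dx\Big)\Big(\int_{B_r(x_0)}\frac{|\nabla u_i^-|^2}{|x-x_0|^{d-2}}\,dx\Big)
\end{equation*}
is monotone non-decreasing and its limit $\Phi_i(0^+)=\lim_{r\to0^+}\Phi_i(r)$ exists as a single number. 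As $\Phi_i$ is scale invariant, along any blow-up sequence $u_{i,x_0,r_n}\to\alpha_i(\nu\cdot x)$ (strongly in $H^1_{\rm loc}$) one computes $\Phi_i(0^+)=c_d^2\,\alpha_i^4$ for a purely dimensional constant $c_d$. Since $\Phi_i(0^+)$ does not depend on the sequence, each $|\alpha_i|$ is uniquely determined; in particular $\alpha_i=0$ for all blow-ups whenever $\Phi_i(0^+)=0$. The relative signs of the $\alpha_i$ are then fixed by the locally constant sign of $u_iu_j$ near $x_0$ (for the indices with $\alpha_i,\alpha_j\ne0$), while the overall sign is exactly the ambiguity $(\vec\alpha,\nu)\mapsto(-\vec\alpha,-\nu)$ that leaves $A=\vec\alpha\otimes\nu$ unchanged. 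Together with the previous paragraph this shows that $A$ is independent of the blow-up sequence, i.e.\ the blow-up is unique.

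The main obstacle I expect is the second paragraph, namely matching the purely geometric tangent plane $T_{x_0}$ coming from rectifiability with the analytic nodal hyperplane of the blow-up of $U$; this requires the compatibility between the stratification spine and the approximate tangent plane (equivalently, that near $x_0$ the free boundary agrees with $S_1$ up to $\mathcal H^{d-1}$-null sets). The application of the monotonicity formula is then essentially a computation, the only care being the strong $H^1_{\rm loc}$-convergence of the rescalings used to pass to the limit in $\Phi_i$ and the verification that $u_i$ is harmonic on each of its sign components.
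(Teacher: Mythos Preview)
Your overall strategy coincides with the paper's: rectifiability of $S_1$ fixes the hyperplane direction $\nu$, and the Alt--Caffarelli--Friedman monotonicity formula fixes the coefficient vector. The paper identifies $\nu$ essentially as you anticipate, using that the rescaled free boundaries $\partial\{|U_{x_0,r_n}|>0\}$ converge in Hausdorff distance to $\partial\{|Ax|>0\}=\ker A$, so $\ker A$ must equal the (a.e.\ unique) tangent plane furnished by \cref{t:main}.

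The one genuine gap is your sign step. Applying ACF to each $u_i$ separately only yields $|\alpha_i|$; your claim that the relative signs are ``fixed by the locally constant sign of $u_iu_j$ near $x_0$'' is not justified: a priori the density of $\{u_iu_j>0\}$ at $x_0$ need not exist, precisely because different blow-up sequences might produce $\alpha_i\alpha_j$ of opposite signs, so there is no intrinsic ``sign of $u_iu_j$'' to read off. The paper sidesteps this by applying ACF not componentwise but to $\sigma\cdot U$ for \emph{every} $\sigma\in\R^k$ (note $\{\sigma\cdot U\neq0\}\subset\Omega_U$, so $\sigma\cdot U$ is harmonic on each of its sign sets). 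The limit of the ACF functional then shows that $|A^t\sigma|$ is independent of the blow-up sequence for all $\sigma$; since $A=\vec\alpha\otimes\nu$ this reads $|\vec\alpha\cdot\sigma|$ is fixed for all $\sigma$, which determines $\vec\alpha$ up to a global sign in one stroke. Equivalently, your argument is repaired by also running ACF on the combinations $u_i\pm u_j$ to recover $|\alpha_i\pm\alpha_j|$.
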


\begin{remark}
	For the first stratum $\mathcal S_1$ the finiteness of the $(d-1)$-Hausdorff measure (but not the rectifiability) was proved in \cite{MTV2} by a different technique. 
\end{remark}	

\section{Proof of \cref{t:main}}

We introduce the quantitative stratification as defined in \cite{NaVa} (see \cite{EdEn} in our context).  The results of this section are simple modifications to our setting of nowadays well-understood results.

\begin{definition}
	Let $U$ be a minimizer of $J$ in $D$. Given a point $x\in \Sing_2(\partial\Omega_U)$, we say that $U$ is $(j,\eps)$-symmetric in $B_r(x)$ if 
	$$
	\frac{1}{r^{d-2}}\int_{B_r(x)}|U- A|^2\, dx<\eps
	$$
	for some linear function $A\colon \R^d\to \R^k$ such that $\text{\rm Rk}(A)=d-j$. The \emph{$(j,\eps)$-stratum, $S^j_\eps(U)$,} is the set of points $x \in \Sing_2(\partial \Omega_U)$ for which $U$ is not $(j+1,\eps)$-symmetric in $B_r(x)$, for every $0 < r \leq \min\{1, \text{\rm dist}(x,\partial D)\}$. 
\end{definition}

We remark that, as usual, $S^j= \bigcup_{\eps>0} S_\eps^j$, so that it will be enough to show that each $S^j_\eps$ is $j$-rectifiable. This follows from Naber and Valtorta breakthrough result \cite{NaVa} combined with the following observations.

\begin{lemma}[Monotonicity formula (see \cite{MTV1})]\label{l:weiss}
Let $U=(u_1,\dots,u_k):D\to\R^k$ be a local minimizer of $J$ in the open set $D\subset\R^d$. Then, the function 
$$\Phi(U,x_0,r):=\int_{B_1}|\nabla U_{x_0,r}|^2\,dx-\int_{\partial B_1}|U_{x_0,r}|^2\,d\HH^{d-1}+\Lambda\big|\big\{|U_{x_0,r}|>0\big\}\big|$$
is monotone non-decreasing in $r$ and we have 
\begin{equation}\label{e:mono}
\partial_r\Phi(U,x_0,r)\ge \sum_{\ell=1}^k\int_{\partial B_1}|x\cdot\nabla u_{\ell,x_0,r}(x)-u_{\ell,x_0,r}(x)|^2\,d\HH^{d-1}(x),
\end{equation}
where $u_{\ell,x_0,r}(x):=\frac1ru_\ell(x_0+rx)$. In particular, for any free boundary point $x_0\in\partial\Omega_U$ we can define the energy density 
$$\Phi(U,x_0,0):=\lim_{r\to0^+}\Phi(U,x_0,r),$$
which also coincides with the Lebesgue density of the set $\Omega_U$ at $x_0$.
\end{lemma}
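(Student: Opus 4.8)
The plan is to prove this Weiss-type monotonicity formula through the two classical first-variation identities for a minimizer, along the lines of \cite{MTV1}. Fixing $x_0=0$ for notational convenience and changing variables in the definition of $\Phi$, I would first rewrite
$$\Phi(U,0,r)=\frac{1}{r^{d}}\int_{B_r}|\nabla U|^2\,dx+\frac{\Lambda}{r^d}\big|\Omega_U\cap B_r\big|-\frac{1}{r^{d+1}}\int_{\partial B_r}|U|^2\,d\HH^{d-1},$$
so that the statement becomes a claim about the $r$-derivative of these ``physical-scale'' quantities. Setting $D(r)=\int_{B_r}|\nabla U|^2$, $M(r)=\Lambda|\Omega_U\cap B_r|$ and $H(r)=\int_{\partial B_r}|U|^2$, one computes $\Phi'$ in terms of $D,D',M,M',H,H'$ by elementary differentiation; the whole content of the lemma is then to feed two variational identities into this routine computation.

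First I would use the \emph{outer variation}. Testing minimality against $U+t\Psi$ with $\Psi$ supported in the open set $\Omega_U$ shows that each component $u_\ell$ is harmonic in $\Omega_U$; since $u_\ell\equiv 0$ on $\Omega_U^c$ and $U$ is Lipschitz, integrating by parts across the free boundary (where the distributional Laplacian is a measure but $u_\ell$ vanishes) gives the harmonicity identity $D(r)=\int_{\partial B_r}U\cdot\partial_\nu U\,d\HH^{d-1}$, which in turn yields $H'(r)=\tfrac{d-1}{r}H(r)+2D(r)$. Next I would use the \emph{inner (domain) variation} $x\mapsto x+t\xi(x)$ with $\xi(x)=x\,\eta(|x|)$ and $\eta$ approximating $\ind_{[0,r]}$: passing to the limit produces the Pohozaev--Rellich identity
$$(d-2)D(r)+d\,M(r)=r\,D'(r)+r\,M'(r)-2r\int_{\partial B_r}|\partial_\nu U|^2\,d\HH^{d-1},$$
valid for a.e.\ $r$.

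Substituting both identities into the expression for $\Phi'(r)$, the terms involving $M$ and the bulk term $D$ cancel, and after completing the square on $\partial B_r$ one is left with
$$\Phi'(r)=\frac{2}{r^{d}}\int_{\partial B_r}\Big|\partial_\nu U-\frac{U}{r}\Big|^2\,d\HH^{d-1}\ge 0.$$
Since on $\partial B_r$ one has $\partial_\nu U-\tfrac{U}{r}=\tfrac1r(x\cdot\nabla U-U)$, rescaling back to $\partial B_1$ turns the right-hand side into $\tfrac{2}{r}\sum_{\ell}\int_{\partial B_1}|x\cdot\nabla u_{\ell,x_0,r}-u_{\ell,x_0,r}|^2\,d\HH^{d-1}$, which dominates the quantity in \eqref{e:mono} on the relevant range of radii (where $r\le 1$, so $\tfrac2r\ge 1$); monotonicity and the existence of the limit $\Phi(U,x_0,0)$ follow at once.

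I expect the main difficulty to be the rigorous justification of the two integration-by-parts steps at the free boundary, rather than the algebra. Concretely, one must use that $U$ is Lipschitz (so $\nabla U\in L^\infty$ with no singular part across $\partial\Omega_U$) and that the boundary traces on $\partial B_r$ are well defined for a.e.\ $r$, so that both the harmonicity identity and the Pohozaev identity hold for a.e.\ $r$; the monotonicity is then obtained in the a.e./distributional sense and upgraded to genuine monotonicity of the continuous function $r\mapsto\Phi(U,x_0,r)$. The identification of $\Phi(U,x_0,0)$ with the Lebesgue density of $\Omega_U$ follows by evaluating $\Phi$ on a one-homogeneous blow-up, for which $D$ and $H$ balance and only the measure term survives.
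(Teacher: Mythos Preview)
The paper does not give its own proof of this lemma; it simply states it with a reference to \cite{MTV1}. Your proposal is correct and is one of the two standard routes to the Weiss monotonicity formula: differentiate $\Phi$ directly and feed in (i) the identity $D(r)=\int_{\partial B_r}U\cdot\partial_\nu U$ coming from harmonicity of the components in $\Omega_U$, and (ii) the Pohozaev--Rellich identity coming from the inner (domain) variation. The algebra you outline indeed collapses to
\[
\Phi'(r)=\frac{2}{r^{d}}\int_{\partial B_r}\Big|\partial_\nu U-\frac{U}{r}\Big|^2\,d\HH^{d-1}
=\frac{2}{r}\sum_{\ell=1}^k\int_{\partial B_1}\big|x\cdot\nabla u_{\ell,x_0,r}-u_{\ell,x_0,r}\big|^2\,d\HH^{d-1},
\]
which is the sharp equality; the inequality \eqref{e:mono} as written in the paper then follows for small radii, exactly as you note. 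The alternative standard proof (closer to Weiss's original argument and to what is usually presented in the Alt--Caffarelli literature) compares $U$ on $B_r$ with its one-homogeneous replacement $\tfrac{|x|}{r}U\big(r\tfrac{x}{|x|}\big)$ and reads off the same inequality from $J(U;B_r)\le J(\tilde U;B_r)$; this avoids the Pohozaev identity but gives only the inequality, not the exact derivative. Your identification of the technical issue---justifying the two integrations by parts across $\partial\Omega_U$ for a.e.\ $r$ using the Lipschitz bound on $U$ and the local finite perimeter of $\Omega_U$---is on point and is handled in \cite{MTV1} and \cite{weiss} by standard approximation.
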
	
	
\begin{lemma}[Quantitative splitting]\label{l:qs}
For any $\rho, \gamma, E_0>0$ there exists $\eta_0=\eta_0(\rho,\gamma,E_0)>0$ such that the following holds. Let $U=(u_1,\dots,u_k):D\to\R^k$ be a local minimizer of $J$ in $D\subset\R^d$ and let $x\in S^j_\eps$.
Let $r>0$ and $E\in\R$, $|E|\le E_0$, be such  that 
\begin{equation}\label{e:qs-hypo}
B_{10r}(x)\subset D\qquad\text{and}\qquad \sup_{z\in B_{3r}(x)}\Phi(U,z,3r)\leq E.
\end{equation}
Then, at least one of the following alternatives hold:
\begin{enumerate}[\rm(1)]
\item either
\begin{equation}\label{e:alt1}
S^j_{\eps}\cap B_r(x) \subset \{z\in B_r(x)\,:\, \Phi(U,z, \gamma r)\geq E-\gamma \}\,,
\end{equation}
\item or, for any $\eta\leq \eta_0$, there is an affine $(j-1)$-dimensional space $L^{j-1}$ such that
\begin{equation}\label{e:alt2}
\{z\in B_r(x) \,:\, \Phi(U, z, 2\eta r) \geq E-\eta\}\subset  B_{\rho r}(L^{j-1})\,,
\end{equation}
where $B_{s}(L)$ denotes the tubular neighborhood of size $s$ around $p+L$.
\end{enumerate}
\end{lemma}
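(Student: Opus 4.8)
The plan is to argue by contradiction and compactness, following the Naber--Valtorta cone-splitting scheme adapted to our setting. Suppose the statement fails. Then there are sequences $\eta_{0,n}\to0$, local minimizers $U_n$, points $x_n\in S^j_\eps$, radii $r_n$ and energies $E_n$ with $|E_n|\le E_0$ satisfying \eqref{e:qs-hypo}, for which neither alternative holds; after translating and rescaling I may take $x_n=0$ and $r_n=1$. The failure of \eqref{e:alt1} produces a point $y_n\in S^j_\eps\cap B_1$ with $\Phi(U_n,y_n,\gamma)<E_n-\gamma$, while the failure of \eqref{e:alt2} produces, for some $\eta_n\le\eta_{0,n}\to0$, a set $F_n:=\{z\in B_1:\Phi(U_n,z,2\eta_n)\ge E_n-\eta_n\}$ not contained in any $\rho$-tubular neighbourhood of a $(j-1)$-plane. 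Using the uniform local Lipschitz bounds and the compactness of minimizers, I would pass to a subsequence along which $U_n\to U_\infty$ locally uniformly and strongly in $H^1_{\mathrm{loc}}$, with $U_\infty$ a global minimizer, $E_n\to E_\infty$, and $y_n\to y_\infty\in\overline{B_1}$; non-degeneracy keeps $0\in\partial\Omega_{U_\infty}$, so $U_\infty\not\equiv0$.

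The first main step is to extract rigidity from \cref{l:weiss}. A standard geometric fact (see \cite{NaVa}) guarantees that, since $F_n$ is not trapped in a tube around any $(j-1)$-plane, it contains $j+1$ points $z_0^n,\dots,z_j^n$ that are $\rho$-independent; in the limit these converge to points $z_0,\dots,z_j$ spanning a $j$-dimensional affine plane $P$. For each fixed $s\in(0,3)$ and $n$ large, monotonicity together with the upper bound in \eqref{e:qs-hypo} pinches the energy,
\begin{equation*}
E_n-\eta_n\le\Phi(U_n,z_i^n,2\eta_n)\le\Phi(U_n,z_i^n,s)\le\Phi(U_n,z_i^n,3)\le E_n,
\end{equation*}
so in the limit $\Phi(U_\infty,z_i,s)\equiv E_\infty$ is constant in $s$. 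By the equality case of \eqref{e:mono}, every component of $U_\infty$ is one-homogeneous with respect to each $z_i$.

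The second step is cone-splitting and classification. Homogeneity with respect to the $j+1$ affinely independent centres $z_0,\dots,z_j$ forces $U_\infty$ to be invariant under translations along $P$, hence $j$-symmetric. Moreover the limiting energy density at each $z_i$ equals $E_\infty$, which by \cref{l:weiss} coincides with the Lebesgue density of $\Omega_{U_\infty}$ there and is therefore at most $1$; on the other hand $E_n\ge\Phi(U_n,0,3)\ge\Phi(U_n,0,0)=1$ because $x_n=0\in\Sing_2$, so $E_\infty=1$ and the $z_i$ are two-phase points of density one. The classification of two-phase blow-ups recalled in \cref{sub:2} then forces the one-homogeneous minimizer $U_\infty$ to be linear, $U_\infty(x)=Ax$, with spine $\ker A\supseteq P$ and $\partial\Omega_{U_\infty}=\ker A$, every point of which has density $\Phi(U_\infty,\cdot,0)=E_\infty=1$.

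The contradiction then comes from the dropped point. Since $y_n$ lies on the free boundary $\partial\Omega_{U_n}$ and free boundaries of minimizers converge in the Hausdorff sense (a consequence of non-degeneracy and uniform convergence), we get $y_\infty\in\partial\Omega_{U_\infty}=\ker A$, so by the previous step $\Phi(U_\infty,y_\infty,\gamma)\ge\Phi(U_\infty,y_\infty,0)=1$; passing the negation of \eqref{e:alt1} to the limit, however, gives $\Phi(U_\infty,y_\infty,\gamma)\le E_\infty-\gamma=1-\gamma$, a contradiction since $\gamma>0$. I expect the main obstacle to be the analytic input making every energy statement pass to the limit: the strong $H^1_{\mathrm{loc}}$ convergence of $U_n$ and the convergence of the measure term $\Lambda\,|\{|U_n|>0\}|$, which together underlie the continuity of $\Phi$ (needed for the rigidity in the second paragraph and for the density identities) and the Hausdorff convergence of the free boundaries (needed to place $y_\infty$ on the spine). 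Once these are in hand, the combinatorial extraction of the $z_i$ and the cone-splitting are routine.
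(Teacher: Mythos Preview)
Your approach is correct but takes a genuinely different route from the paper at the key step. The paper splits the task into two sub-claims, each proved by its own contradiction: (a) every point in a $\beta$-neighbourhood of the $j$-plane $L$ spanned by the pinched points satisfies $\Phi\ge E-\gamma$; and (b) $S^j_\eps\cap B_1$ is contained in that $\beta$-neighbourhood. Claim (b) is where the paper actually exploits the $S^j_\eps$ hypothesis on the dropped point: if some $x_n\in S^j_\eps$ stayed at distance $\ge\beta$ from $L_n$, then the limit $V$ is invariant along $L$ and $1$-homogeneous, with $x_\infty\notin L$, so the blow-up of $V$ at $x_\infty$ picks up an extra direction of invariance and is $(j{+}1)$-symmetric, contradicting $x_n\in S^j_\eps$. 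The paper never needs to identify $V$ as linear nor to compute $E$. You instead bypass (b) entirely by importing three problem-specific facts --- the identification $E_\infty=1$ via the density interpretation in \cref{l:weiss}, the linear classification of $\Sing_2$ blow-ups from \cref{sub:2}, and Hausdorff convergence of the free boundaries --- to conclude that $U_\infty$ is affine linear with $\partial\Omega_{U_\infty}$ equal to an affine copy of $\ker A$, so that \emph{any} limit of free-boundary points (in particular $y_\infty$) lands on the spine; you never use that $y_n\in S^j_\eps$ beyond $y_n\in\partial\Omega_{U_n}$. Your argument is valid here, but it leans on the special structure of the vectorial Bernoulli problem, whereas the paper's proof uses only the monotonicity formula and cone-splitting and is a verbatim instance of the abstract Naber--Valtorta template.
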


\begin{proof} Without loss of generality we can assume $x=0$ and $r=1$.

We can assume there are points $y_1,\dots, y_j \in B_1$ such that 
\begin{equation}\label{e:case1}
	y_i\notin B_\rho(p+{\rm span}\{y_1,\dots,y_{i-1},y_{i+1},\dots, y_j\})\quad \text{and}\quad \Phi(U,y_i,2\eta) \geq E-\eta  
\end{equation}	
for every $i=1,\dots,j$ and for some $\eta>0$ to be chosen later. Indeed if such points don't exist, then there is a $(j-1)$-dimensional affine plane $L^{j-1}$ such that
$$
y\in B_1\setminus B_{\rho}(L^{j-1}) \quad \Rightarrow \quad \Phi(U,y,2\eta)<E-\eta\,,
$$
which is precisely the alternative (2).

Now we claim that there exist $\eta, \beta>0$ such that if \eqref{e:qs-hypo} and \eqref{e:case1} hold, for some $E$ and $\rho>0$, then we have
\begin{gather}
\Phi(U, z, \gamma)\geq E-\gamma\qquad \text{for every}\qquad z\in B_\beta(L)\cap B_1\label{e:1}\\
S^j_\eps\cap B_1\subset B_{\beta}(L)\cap B_1 \label{e:2}\,,
\end{gather} 
where $L=p+{\rm span}\{y_1,\dots,y_j\}$, which clearly implies that alternative (1) holds and concludes the proof.

Suppose therefore that \eqref{e:1} fails. Then there are sequences $U_n, y_{n,i}, L_n, \eta_n, \beta_n, E_n$ such that $\eta_n,\beta_n\to 0$ and for each $n\in\N$ there is $x_n \in B_{\beta_b}(L_n)\cap B_1$ such that 
$$\Phi(U_n,x_n, \gamma)\leq E_n-\gamma.$$ 
Up to passing to a subsequence we have that $U_n$ converges strongly in $H^1_{loc}(B_{9})$ and locally uniformly in $B_{9}$ to a function $V:B_{9}\to\R$, which is a local minimizer of $J$ in $B_{9}$. Moreover, we can also suppose that
$$
E_n\to E,\quad y_{n,i}\to y_i,\quad L_n\to L,\quad x_{n}\to x\in \overline B_1\cap L \,.
$$
 By the contradiction assumption and by the continuity of $\Phi$ for fixed radius we get
\begin{equation}\label{e:cont1}
\Phi(V,x,0)\le \Phi(V,x,\gamma)=\lim_{n\to\infty}\Phi(U_n,x_n,\gamma)\leq E-\gamma\,.
\end{equation}
On the other hand, we notice that by \eqref{e:case1} and the contradiction asssumption we have 
$$\Phi(U_n,y_{n,i},2\eta_n) \geq E_n-\eta_n\quad\text{for every}\quad n\ge 1.$$
Thus, for every fixed $\rho>0$, 
$$\Phi(V, y_i, \rho)=\lim_{n\to\infty}\Phi(U_n,y_{n,i},\rho)\ge \lim_{n\to\infty}\Phi(U_n,y_{n,i},2\eta_n) \geq \lim_{n\to\infty}(E_n-\eta_n)=E,$$
and passing to the limit as $\rho\to0$, we get
$$
\Phi(V, y_i, 0)\geq E \quad \text{for every}\quad i\in\{1,\dots j\}\,.
$$
On the other hand, by hypothesis, 
$$\sup_{z\in B_3} \Phi(U_n,z,3)\leq E_n\quad\text{for every}\quad n\ge 1.$$
Thus, using the coninuity of $\Phi$ in the first two variables (for fixed radius $r=3$), we get 
$$\sup_{z\in B_1} \Phi(V,z,3)\leq E.$$
Now the monotonicity formula for $V$ at the points $y_1,\dots,y_j$ implies that 
$$\Phi(V,y_i,r)= E\quad\text{for every}\quad i\in\{1,\dots,j\}\quad\text{and every}\quad 0\le r<3.$$
Thus, by \cref{l:weiss} implies that $V$ is one-homogeneous with respect to each of the points $y_1,\dots,y_j$, which means that $V$ is one-homogeneous and independent of $L$. Now since $x\in L$, this implies that $\Phi(V,x,0) =E$, which is a contradiction with \eqref{e:cont1}.\smallskip

Thus, we know that there are $\eta_0>0$ and $\beta>0$ such that if \eqref{e:qs-hypo} and \eqref{e:case1} hold, for some $E\in(-E_0,E_0)$, $\rho>0$ and $\eta\le\eta_0$, then also \eqref{e:1} holds. Thus, it remains to show that with this choice of $\beta$ we have also \eqref{e:2}. Arguing again by contradiction, we suppose that \eqref{e:2} fails for a  sequence $U_n, y_{n,i}, L_n$ and $x_n \in \Big(S^j_\eps(U_n)\setminus B_{\beta}(L_n)\Big)\cap B_1$ for which \eqref{e:qs-hypo}, \eqref{e:case1} and \eqref{e:1} hold for some $\eta_n\to0$ (with fixed $\rho,\beta,\gamma$, and $E$).  Then, by the same argument as above, we obtain a $1$-homogeneous function $V$ (the limit of $U_n$) invariant on $L$ (the limit of $L_n$) and a point $x\in \Big(S_{\eps}^j(V)\setminus B_\beta(L)\Big)\cap B_1$, which is the limit of $x_n$. This implies that any blow-up $W$ of $V$ at $x$ has $j+1$ symmetries. In fact, since $V$ is invariant with respect to every $y\in L$, so is $W$; moreover, since $V$ is $1$-homogeneous and $x\neq0$, its blow-ups at $x$ are invariant in the direction of $x$. Thus, $U_n$ is $(j+1,\eps)$-symmetric at $x_n$, for $n$ sufficiently large, which is a contradiction with $x_n\in  S_{\eps}^j(U_n)$.
\end{proof}

\begin{lemma}[Effective control of the $\beta$ number-$L^2$ estimate]\label{l:ec} Let $U=(u_1,\dots,u_k):D\to\R^k$ be a local minimizer of $J$ in the open set $D\subset\R^d$ and let $x_0\in S^j_\eps(U)$. There is a $\delta=\delta(d,\eps)>0$ such that if 
$$
\Phi(U,x_0,8r)-\Phi(U,x_0,\delta r)<\delta\,,
$$
then for any finite Borel measure $\mu$ the following estimate holds
$$
\beta_\mu^j(x,r)^2\leq \frac{C(d,\eps)}{r^j} \int_{B_r(x)} \big( \Phi(U,y,8r)-\Phi(U,y,r) \big) \,d\mu(y)\,,
$$
where $\beta_\mu^j(x,r)$ is the Jones's $\beta$ number defined by
$$
\beta_\mu^j(x,r)^2=\inf_{p+L^j} \frac1{r^{j+2}} \int_{B_r(x)} \text{\rm dist}\,(y, p+L^j)^2\,d\mu(y)\,.
$$
where the infimum is taken over all affine $j$-dimensional planes $p+L^j$.
\end{lemma}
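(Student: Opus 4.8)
The plan is to run the Naber--Valtorta $L^2$-best approximation scheme (in the free-boundary form of \cite{EdEn}), using the monotonicity formula of \cref{l:weiss} as the only PDE input. By the scale invariance of both $\Phi$ and $\beta^j_\mu$, I first rescale so that $x_0=0$ and $r=1$; the hypothesis becomes $\Phi(U,0,8)-\Phi(U,0,\delta)<\delta$ and the goal becomes $\beta^j_\mu(0,1)^2\le C\int_{B_1}W(y)\,d\mu(y)$, where I abbreviate $W(y):=\Phi(U,y,8)-\Phi(U,y,1)$.

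The first step is to produce a good candidate $j$-plane. Since $\Phi$ has almost no density drop over the wide range of scales $[\delta,8]$, the monotonicity inequality \eqref{e:mono} forces the non-homogeneity of $U$ about $0$ to be small in $L^2$; hence, by the same strong $H^1_{loc}$-compactness argument used in the proof of \cref{l:qs}, $U$ is $H^1(B_8)$-close (with closeness tending to $0$ as $\delta\to0$) to a $1$-homogeneous global minimizer $P$. Let $V:=\mathrm{spine}(P)$ be its maximal subspace of translation invariance; because $0\in S^j_\eps$, $U$ cannot be $(j+1,\eps)$-symmetric at any scale, so $\dim V\le j$ (otherwise $U$ would inherit an approximate $(j+1)$-symmetry from $P$). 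Padding $V$ to any $j$-dimensional subspace and using that $\mathrm{dist}(y,\cdot)$ decreases under enlarging the plane, it suffices to prove the pointwise estimate $\mathrm{dist}(y,V)^2\le C\,W(y)$ for every $y\in B_1$, after which integration against $\mu$ gives the claim.

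The heart of the matter is this quadratic estimate, which is where the rigidity of homogeneous minimizers enters. By \eqref{e:mono}, $W(y)$ bounds from below the scale-invariant $L^2$-norm on the annulus $B_8(y)\setminus B_1(y)$ of the non-homogeneity $(x-y)\cdot\nabla U(x)-U(x)$. I decompose this as $\big[x\cdot\nabla U-U\big]-y\cdot\nabla U$: the bracket is small because the drop at $0$ is $<\delta$, so $U$ is nearly homogeneous about $0$; and writing $y\cdot\nabla U=(\pi_V y)\cdot\nabla U+(\pi_{V^\perp}y)\cdot\nabla U$, the $\pi_V$-term is small since $U\approx P$ is invariant along $V$. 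Thus $W(y)\gtrsim \int|(\pi_{V^\perp}y)\cdot\nabla U|^2-C\delta$. The quadratic form $v\mapsto\int|v\cdot\nabla P|^2$ is positive definite on $V^\perp$ (its null space is exactly the spine), and by compactness this spectral gap survives for $U$ close to $P$, giving $\int|(\pi_{V^\perp}y)\cdot\nabla U|^2\ge c\,|\pi_{V^\perp}y|^2=c\,\mathrm{dist}(y,V)^2$. Combining yields $\mathrm{dist}(y,V)^2\lesssim W(y)$ once $\delta$ is chosen small.

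The main obstacle is precisely this last step: upgrading the qualitative cone-splitting rigidity to the sharp quadratic dependence with constants uniform in $\mu$ and stable under the compactness argument. The delicate points are (i) choosing $\delta=\delta(d,\eps)$ small enough that the approximation by $P$, and hence the spectral gap, holds simultaneously at all vertices $y\in B_1$ and all scales entering $W(y)$; and (ii) controlling the error produced by the vertex $0$ (the $\big[x\cdot\nabla U-U\big]$ term) so that the final bound is genuinely governed by the drop $W(y)$ at $y$ rather than by a fixed quantity at $0$. These are exactly the places where the precise form of \eqref{e:mono} and the positive-definiteness of the second variation of $\Phi$ at a $1$-homogeneous minimizer are used.
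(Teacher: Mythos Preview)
Your scheme picks the candidate plane from $U$ (the spine $V$ of a nearby homogeneous minimizer $P$) and aims for a \emph{pointwise} bound $\mathrm{dist}(y,V)^2\le C\,W(y)$. This step fails: your own decomposition produces additive errors from the non-homogeneity of $U$ about the origin (the bracket $[x\cdot\nabla U-U]$) and from the $H^1$-distance $\|U-P\|$, both of size $o_\delta(1)$ but independent of $y$. The best one obtains is $\mathrm{dist}(y,V)^2\le C\,W(y)+C\delta$, which after integration leaves a parasitic term $C\delta\,\mu(B_1)$ that is \emph{not} dominated by $\int_{B_1}W\,d\mu$ (the latter can be arbitrarily small while $\mu(B_1)$ stays of order one). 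You correctly flag this as obstacle (ii), but you do not resolve it, and it is in fact not resolvable with a plane chosen independently of $\mu$.

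The paper's argument is structurally different: the plane is chosen from the \emph{measure}. One diagonalizes the covariance form $B(v,w)=\fint_{B_r(x)}((y-p)\cdot v)((y-p)\cdot w)\,d\mu$ with eigenpairs $(\lambda_i,v_i)$, so that $\beta^j_\mu(x,r)^2=\mu(B_r(x))\,r^{-j}\sum_{i>j}\lambda_i$. The eigenvector identity $\lambda_i v_i=\fint((y-p)\cdot v_i)\,y\,d\mu$ combined with Cauchy--Schwarz gives, for every $z$,
\[
\lambda_i\,|v_i\cdot\nabla U(z)|^2\ \le\ \fint_{B_r(x)}\big|U(z)-(z-y)\cdot\nabla U(z)\big|^2\,d\mu(y),
\]
whose right-hand side is already the non-homogeneity centred at $y$; integrating in $z$ over an annulus and invoking \eqref{e:mono} yields $\lambda_i\int_{A_{3r,4r}(x)}|v_i\cdot\nabla U|^2\le C\fint_{B_r(x)}W\,d\mu$ with \emph{no} additive error. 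The pinching hypothesis $\Phi(U,x_0,8r)-\Phi(U,x_0,\delta r)<\delta$ is used only afterwards, via a compactness/contradiction argument, to secure the uniform lower bound $\int_{A_{3r,4r}(x)}\sum_{i\le j+1}\sum_\ell|v_i\cdot\nabla u_\ell|^2\ge c(d,\eps)$ for \emph{any} orthonormal $(j{+}1)$-frame (this is where $x_0\in S^j_\eps$ enters). Dividing by this lower bound gives $\lambda_{j+1}\le C\fint W\,d\mu$ and hence the lemma.
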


\begin{proof} Let $p_{x,r}$ be the barycenter of $\mu$ in $B_r(x)$, that is
	$$
	p_{x,r}:=\frac{1}{\mu(B_r(x))} \int_{B_r(x)}y\,d\mu(y)=\fint_{B_r(x)} y\,d\mu(y)\,.
	$$ 
	Consider the symmetric positive semi-definite bilinear form $B\colon \R^d\times \R^d\to \R$ defined by 
	$$
	B_x(v,w):=\fint_{B_r(x)} \left((y-p_{x,r})\cdot v\right) \, \left((y-p_{x,r})\cdot w\right)\,d\mu(y)\qquad \forall v,w \in \R^d\,.
	$$
	By standard linear algebra, there exists an orthonormal basis of vectors $\{v_1, \dots , v_d\} \subset \R^d$ which diagonalizes the bilinear form $B_x$, that is
	$$
	B_{x}(v_i,v_j)=\delta_{ij}\,\lambda_i\qquad \text{where}\qquad 0\leq \lambda_d\leq\dots\leq \lambda_1\,. 
	$$
	If we denote with $L^j_{\mu}(x,r)$ the plane realizing the infimum in the definition of $\beta^j_\mu(x,r)$, it is then easy to check that
	\begin{equation}\label{e:L2}
	L^j_\mu=p_{x,r}+{\rm span}\{v_1,\dots,v_j\}\qquad \text{and}\qquad \beta^j_\mu(x,r)^2=\frac{\mu(B_r(x))}{r^j}\sum_{i=j+1}^d\lambda_i\ .
	\end{equation}
	Moreover, since the barycenter $p_{x,r}$ satisfies the equation
	\begin{equation}\label{e:barycenter}
	\int_{B_r(x)}(y-p_{x,r})\,d\mu(y)=0,
	\end{equation} 
	for every $i=1,\dots,d$, we have
	\begin{align}\label{e:silly1}
	\lambda_i\, v_i
			&=\sum_{j=1}^d\left(\fint_{B_r(x)}\left((y-p_{x,r})\cdot v_i\right) \left((y-p_{x,r})\cdot v_j\right)\,d\mu(y)\right)v_j\notag\\
		&=\fint_{B_r(x)}\left((y-p_{x,r})\cdot v_i\right) \left[\sum_{j=1}^d\left((y-p_{x,r})\cdot v_j\right)\,v_j\right]  \,d\mu(y)\notag\\
		&=\fint_{B_r(x)}\left((y-p_{x,r})\cdot v_i\right) (y-p_{x,r})\,d\mu(y) \notag\\
		&=\fint_{B_r(x)}\left((y-p_{x,r})\cdot v_i\right) y\,d\mu(y)\,.
	\end{align}
	Next, for each component $u_\ell$ of $U$, we compute
	\begin{align*}
	\lambda_i\,(v_i\cdot \nabla u_\ell(z))
		&=\fint_{B_r(x)} \left( (y-p_{x,r}) \cdot v_i   \right)\,\left(y \cdot \nabla u_\ell(z)   \right)\,d\mu(y)\\
		&=\fint_{B_r(x)} \left( (y-p_{x,r}) \cdot v_i   \right)\,\left( u_\ell(z)-(z-y) \cdot \nabla u_\ell(z)   \right)\,d\mu(y)\,,
	\end{align*}
	where in the second equality we used again \eqref{e:barycenter}. Using H\"older inequality we deduce
	\begin{align*}
	\lambda_i^2\,|v_i\cdot \nabla u_\ell(z)|^2	
		&\leq \fint_{B_r(x)} \left( (y-p_{x,r}) \cdot v_i   \right)^2\,d\mu(y)\, \fint_{B_r(x)}\left(u_\ell(z)-(z-y) \cdot \nabla u_\ell(z)  \right)^2\,d\mu(y)\\
		&=\lambda_i \,\fint_{B_r(x)}\left(u_\ell(z)-(z-y) \cdot \nabla u_\ell(z)    \right)^2\,d\mu(y)\,.
	\end{align*}
	Summing over the components of $U$, we conclude 
	\begin{equation}\label{e:L21}
	\lambda_i\,\sum_{\ell=1}^k |\nabla u_\ell(z)\cdot v_i|^2	\leq \sum_{\ell=1}^k \fint_{B_r(x)}\left(u_\ell(z)-(z-y) \cdot \nabla u_\ell(z)    \right)^2\,d\mu(y)\,.
	\end{equation}
	Next, we set $A_{s,t}(x):=B_t(x)\setminus B_s(x)$ and compute
	\begin{align}
	\lambda_i &r^{-d-2} \int_{A_{3r,4r}(x)}\sum_{\ell=1}^k \left(\nabla u_\ell(z)\cdot v_i\right)^2\,dz \notag\\	
		&\leq r^{-d-2} \int_{A_{3r,4r}(x)}  \sum_{\ell=1}^k \fint_{B_r(x)}\left(u_\ell(z)-(z-y) \cdot \nabla u_\ell(z)    \right)^2\,d\mu(y)\,dz\notag\\
		&\leq C_k\, \fint_{B_r(x)}\sum_{\ell=1}^k \int_{A_{3r,4r}(x)} \left(u_\ell(z)-(z-y) \cdot \nabla u_\ell(z)    \right)^2\,|z-y|^{-d-2}dz\,d\mu(y)\notag\\
		&\leq C_k\, \fint_{B_r(x)}    \left(\Phi(U,x,8r)-\Phi(U,x,r)  \right)      \,   d\mu(y)
	\end{align}
	where in the second inequality we used \eqref{e:L21} and in the last inequality we used \eqref{e:mono}.
	
	Next we claim that there is $\delta=\delta(\eps,d)$, $c=c(d,\eps)>0$ such that for any orthonormal family of vectors $\{v_1,\dots,v_{j+1}\}$ we have
	\begin{equation}\label{e:L22}
	c\leq \frac1{r^{d+2}} \int_{A_{3r,4r}(x)} \sum_{i=1}^{j+1}\sum_{\ell=1}^k(v_i\cdot \nabla u_\ell(z))^2\,dz\,.
	\end{equation}
	
It is enough to show this when $r=1$ and $x=0$, so we assume by contradiction that there is a sequence of minimizers $U_n=(u_{n,1},\dots,u_{n,k})$ and othonormal systems $\{v_1^n,\dots,v_{j+1}^n\}$ such that 
	\begin{equation}\label{e:bla1}
 \int_{A_{3,4}(0)} \sum_{i=1}^{j+1}\sum_{\ell=1}^k(v_i^k\cdot \nabla u_{n,\ell}(z))^2\,dz \leq \frac 1n\,,
\end{equation}
and moreover 
\begin{equation}\label{e:bla2}
\Phi(U_n,0,8)-\Phi(U_n,0,\sfrac1{n})<\frac 1n\,.
\end{equation}
Up to passing to a subsequence we can assume that $U_n\to V$ strongly in $W^{1,2}$, with $V$ a minimizer. By \eqref{e:bla2} we have that $V$ is $1$-homogeneous, by \eqref{e:bla1} we have that  $v_i^n\cdot \nabla u_{n,\ell}(z)=0$ for every $z\in A_{3,4}(0)$, $\ell=1,\dots,k$ and $i=1,\dots,j+1$, which implies that $V$ is $j+1$ symmetric in $B_8$ (as it is $1$-homogeneous). This is a contradiction with the hypothesis $0\in S^j_\eps(U_n)$ for every $n$.

Finally, combining \eqref{e:L2}, \eqref{e:L21} and \eqref{e:L22} we conclude
\begin{align*}
\beta^j_\mu(x,r)^2
	& \leq \frac{\mu(B_r(x))}{r^j} \, k \, \lambda_{j+1} \\
	&\leq k\frac{\mu(B_r(x))}{c\,r^j} \sum_{i=1}^{j+1} \frac{\lambda_{i}}{r^{d+2}} \int_{A_{3r,4r}(x)} \sum_{i=1}^{j+1}\sum_{\ell=1}^k(v_i\cdot \nabla u_\ell(z))^2\,dz\\
	& \leq \frac{C(d,\eps)}{r^j} \int_{B_r(x)}    \left(\Phi(U,x,8r)-\Phi(U,x,r)  \right)      \,   d\mu(y)\,,
\end{align*}
as desired.
\end{proof}

\begin{proof}[Proof of \cref{t:main}]
The proof of \cref{t:main} follows by combining Lemmas \ref{l:weiss}, \ref{l:qs} and \ref{l:ec} with the work of Naber-Valtorta \cite{NaVa} (see also \cite{Ed} for a more concise presentation of the fact that the three lemmas imply the desired result).	
\end{proof}

\section{Uniqueness of the blow-up limits. Proof of \cref{c:main2}}
In order to prove \cref{c:main2} we will need the following lemma.
\begin{lemma}
Let $U:D\to\R^k$ be a local minimizer of $J$ in the open set $D\subset\R^k$. Let $x_0\in \text{\rm Sing}_2(\partial\Omega_U)$. Let $Ax$ and $Bx$ are two linear functions obtained as blow-up limits of $U$ at $x_0$, then 
$$|A^t\sigma|=|B^t\sigma|\qquad\text{for every}\qquad\sigma\in\R^k.$$
\end{lemma}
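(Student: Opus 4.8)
The plan is to freeze the direction $\sigma$ and pass to the \emph{scalar} function $w_\sigma:=\sigma\cdot U=\sum_{\ell}\sigma_\ell u_\ell$, to which the Alt--Caffarelli--Friedman (ACF) monotonicity formula applies. Since each component $u_\ell$ solves the Euler--Lagrange equation $\Delta u_\ell=0$ in $\Omega_U=\{|U|>0\}$, the linear combination $w_\sigma$ is harmonic there as well and vanishes identically on $\R^d\setminus\Omega_U$. Hence the positive and negative parts $w_{\sigma,+}$ and $w_{\sigma,-}$ are nonnegative, continuous, have disjoint supports, and are each subharmonic on $D$ (a nonnegative continuous function which is harmonic in the open set where it is strictly positive is subharmonic). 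Therefore the ACF functional
\begin{equation*}
\Psi_\sigma(x_0,r):=\frac{1}{r^4}\left(\int_{B_r(x_0)}\frac{|\nabla w_{\sigma,+}|^2}{|x-x_0|^{d-2}}\,dx\right)\left(\int_{B_r(x_0)}\frac{|\nabla w_{\sigma,-}|^2}{|x-x_0|^{d-2}}\,dx\right)
\end{equation*}
is finite (because $w_\sigma$ is Lipschitz and the weight is locally integrable) and monotone non-decreasing in $r$; in particular $\lim_{r\to 0^+}\Psi_\sigma(x_0,r)$ exists and is a \emph{single} number depending only on $x_0$ and $\sigma$, independent of any subsequence.

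Next I would use the scale invariance of $\Psi_\sigma$. A direct change of variables gives $\Psi_\sigma(x_0,r)=\Psi_{\sigma,x_0,r}(0,1)$, where $\Psi_{\sigma,x_0,r}$ is the ACF functional built from the rescaling $w_{\sigma,x_0,r}(y)=\tfrac1r w_\sigma(x_0+ry)=\sigma\cdot U_{x_0,r}(y)$ (the factor $r^{-4}=r^{-2}\cdot r^{-2}$ distributes across the two factors, each of which absorbs the rescaling). Fix a blow-up sequence $r_n\to 0$ with $U_{x_0,r_n}\to Ax$ strongly in $H^1_{loc}$ and locally uniformly; then $w_{\sigma,x_0,r_n}\to (A^t\sigma)\cdot x$ in the same senses. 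Setting $a:=A^t\sigma$, the key technical step is the continuity of the radius-one ACF functional along this convergence: using the uniform Lipschitz bound to dominate the integrand by $C|y|^{2-d}\in L^1(B_1)$, together with the a.e.\ convergence $\nabla(w_{\sigma,x_0,r_n})_\pm=\ind_{\{\pm w_{\sigma,x_0,r_n}>0\}}\nabla w_{\sigma,x_0,r_n}\to\nabla(a\cdot x)_\pm$ (valid since $\nabla w=0$ a.e.\ on $\{w=0\}$), dominated convergence yields
\begin{equation*}
\lim_{r\to 0^+}\Psi_\sigma(x_0,r)=\lim_{n\to\infty}\Psi_{\sigma,x_0,r_n}(0,1)=\Psi^{\mathrm{lin}}(a),
\end{equation*}
where $\Psi^{\mathrm{lin}}(a)$ is the ACF functional of the linear map $y\mapsto a\cdot y$ at radius $1$.

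A short computation then evaluates the right-hand side: for $\ell(y)=a\cdot y$ the gradient of $\ell_+$ equals $a$ on the half-space $\{a\cdot y>0\}$ and $0$ elsewhere, so $\int_{B_1}\frac{|\nabla\ell_\pm|^2}{|y|^{d-2}}\,dy=\tfrac12|a|^2\int_{B_1}|y|^{2-d}\,dy=\tfrac14\,\omega_{d-1}|a|^2$, with $\omega_{d-1}=\HH^{d-1}(\partial B_1)$, whence $\Psi^{\mathrm{lin}}(a)=\tfrac{1}{16}\,\omega_{d-1}^2\,|a|^4$. Running the identical argument along a second blow-up sequence producing $Bx$ computes the \emph{same} limit $\lim_{r\to0^+}\Psi_\sigma(x_0,r)$ as $\Psi^{\mathrm{lin}}(B^t\sigma)$; comparing the two expressions gives $|A^t\sigma|^4=|B^t\sigma|^4$, and hence $|A^t\sigma|=|B^t\sigma|$, as claimed.

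The two points demanding genuine care are the subharmonicity of $w_{\sigma,\pm}$—which is exactly where the two-phase structure and the harmonicity of the components inside $\Omega_U$ enter, securing the applicability of the ACF formula—and the passage to the limit in the singularly weighted integrals defining $\Psi_\sigma$. The latter is the main obstacle, and I expect to dispatch it precisely as above, relying on the uniform Lipschitz estimates for minimizers and the strong $H^1_{loc}$ convergence of the blow-up sequence to justify dominated convergence against the weight $|y|^{2-d}$.
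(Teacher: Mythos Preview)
Your proof is correct and follows essentially the same approach as the paper's: project onto $\sigma$, apply the Alt--Caffarelli--Friedman monotonicity formula to the scalar function $\sigma\cdot U$, and use scale invariance together with strong $H^1_{loc}$ convergence of blow-ups to identify the limit $\lim_{r\to 0}\Psi_\sigma(x_0,r)$ as a function of $|A^t\sigma|$ alone. You supply more detail than the paper does---the subharmonicity of $(\sigma\cdot U)_\pm$, the dominated-convergence justification for passing to the limit under the singular weight, and the explicit constant $\tfrac{1}{16}\omega_{d-1}^2|A^t\sigma|^4$---but the argument is the same.
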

\begin{proof}
Suppose, without loss of generality, that $x_0=0$. For every vector $\sigma\in\R^k$, consider the function 
$$U\cdot\sigma:D\to\R.$$
Since $U$ is harmonic, where it is non-zero, we have that also the function $\sigma\cdot U$ is harmonic on the sets $\{\sigma\cdot U>0\}$ and $\{\sigma\cdot U<0\}$. Thus, by the Alt-Caffarelli-Friedman monotonicity formula (see \cite{alcafr}), we have that the quantity
$$\Psi(U,\sigma,r)=\frac1{r^4}\left(\int_{B_r\cap \{\sigma\cdot U>0\}}\frac{|\nabla (\sigma\cdot U)|^2}{|x|^{d-2}}\,dx\right)\left(\int_{B_r\cap \{\sigma\cdot U<0\}}\frac{|\nabla (\sigma\cdot U)|^2}{|x|^{d-2}}\,dx\right)$$
is non-decreasing in $r$. In particular, the limit 
$$\lim_{r\to0}\Psi(U,\sigma,r),$$
exists. Suppose now that the blow-up sequence $U_{0,r_n}$ converges (locally uniformly and strongly in $H^1_{loc}$) to a blow-up limit of the form $Ax$, where $A$ is $d\times k$ matrix. Then, 
since 
$$\Psi(U,\sigma,r)=\Psi(U_{0,r},\sigma,1),$$
we have that
$$\lim_{r\to0}\Psi(U,\sigma,r)=\left(\int_{B_1\cap \{\sigma\cdot Ax>0\}}\frac{| A^t\sigma|^2}{|x|^{d-2}}\,dx\right)\left(\int_{B_1\cap \{\sigma\cdot Ax<0\}}\frac{|A^t\sigma|^2}{|x|^{d-2}}\,dx\right),$$
which concludes the proof.
\end{proof}

\begin{proof}[\bf Proof of \cref{c:main2}]
By \cref{t:main} we know that at almost every point $x_0\in \text{Sing}_2(\partial\Omega_U)$ there is a unique tangent plane 
$T=\{x\ :\ x\cdot\nu=0\}$ (where $\nu\in\R^d$ is a unit vector) to $\partial\Omega_u$. Let now $U_{x_0,r_n}$ be a blow-up sequence converging to a blow-up limit of the form $Ax$. Since $\partial\{|U_{x_0,r_n}|>0\}$ converges in the Hausdorff distance in $B_1$ to $\partial\{|Ax|>0\}$, we have that $\text{Ker}A$ is precisely the tangent plane $T$. Thus, $A=\eta\otimes\nu$ for some vector $\nu\in\R^k$. Thus, for any vector $\sigma\in\R^k$, we have that 
$|A^t\sigma|=|\eta\cdot\sigma|$. As a consequence, the vector $\eta$ (and thus the matrix $A$) does not depend on the blow-up sequence. 	
\end{proof}

\end{document}